\documentclass[onefignum,onetabnum]{siamart171218}


\ifpdf
\hypersetup{
  pdftitle={Large deviation theorem for branches of the random binary tree in the Horton-Strahler analysis},
  pdfauthor={K. Yamamoto}
}
\fi




\usepackage{amsmath, amssymb}

\newtheorem{rem}{Remark}
\newtheorem{prop}{Proposition}

\title{Large deviation theorem for branches of the random binary tree in the Horton-Strahler analysis}

\author{Ken Yamamoto\thanks{Department of Physics and Earth Sciences, Faculty of Science, University of the Ryukyus, Senbaru, Okinawa, Japan (\email{yamamot@sci.u-ryukyu.ac.jp}).}}

\newcommand{\avg}[1]{E\left[#1\right]} 
\newcommand{\cvgdist}{\xrightarrow{D}} 

\newcommand{\floor}[1]{\lfloor#1\rfloor} 

\begin{document}

\maketitle

\begin{abstract}
The Horton-Strahler analysis is a graph-theoretic method to measure the bifurcation complexity of branching patterns, by defining a number called the order to each branch.
The main result of this paper is a large deviation theorem for the number of branches of each order in a random binary tree.
The rate function associated with a large deviation cannot be derived in a closed form;
instead, asymptotic forms of the rate function are given.
\end{abstract}

\begin{keywords}
  large deviation, central limit theorem, binary tree
\end{keywords}

\begin{AMS}
  60F10, 05C05, 60F05
\end{AMS}

\section{Introduction}\label{sec1}
The topological analysis of branching patterns or objects began with hydrological research on river networks.
Horton proposed a systematic method to assign a number (called the \emph{order}) to each stream based on the join of streams~\cite{Horton}.
Horton's law of stream numbers is an empirical relation stating that the number of streams of order $r$ decreases geometrically with $r$.
Horton's method partially needs information about spatial configuration of the river network such as stream lengths and junction angles.
Strahler refined Horton's method so that the order is defined by a purely graph-theoretic way~\cite{Strahler}.

Strahler's ordering method for a binary tree is composed of the following three rules.
\begin{enumerate}
\item The leaf nodes (degree-one nodes) are defined to have order 1.
\item A node whose children have different order $r_1$ and $r_2$ ($r_1\ne r_2$) has order $\max\{r_1, r_2\}$.
\item A node whose two children have the same order $r$ has order $r+1$.
\end{enumerate}
We define a \emph{branch} of order $r$ as a maximal connected path whose constituent node(s) all have the same order $r$.
For a binary tree $\tau$ having $n$ leaves, let $S_{r,n}(\tau)$ denote the number of its order-$r$ branches.
Further, based on Strahler's ordering, Tokunaga~\cite{Tokunaga} established a method, called the Tokunaga indexing, to describe the structure of side-branching.
The Horton-Strahler analysis, based on the branch order, has been applied to a wide variety of branching patterns and structures, such as botanical trees~\cite{Leopold} and blood vessels~\cite{VanBavel} in biology, register allocation in computer science~\cite{Devroye}, cracks in material engineering~\cite{Djordjevic}, and complex network analysis~\cite{Guimera}.

In this paper, we focus on \textit{rooted}, \textit{planar}, \textit{full} binary trees~\cite{Stanley}.
This class of binary trees appears naturally in modeling a river network.
A special node corresponding to the estuary is called the \textit{root}; each stream has a flow direction towards it.
A river network is embedded in the ground surface, so the corresponding tree is \textit{planar}; formally, a planar binary tree is defined as a rooted binary tree with right and left directions assigned to each pair of children of the same parent.
If we set only the junction points as internal nodes, each node has either zero or two children; this type of binary tree is called a \textit{full} binary tree.
We let $\Omega_n$ denote the set of planar full binary trees having $n$ leaves.
The number of distinct trees in $\Omega_n$ is expressed as
\[
\vert\Omega_n\vert=\frac{(2n-2)!}{n!(n-1)!},
\]
and this combinatorial quantity is known as the $(n-1)$th Catalan number~\cite{Stanley}.
For example, $\Omega_3$ consists of two binary trees, each of which has three branches of order 1 and one branch of order 2 (see Fig.~\ref{fig1} for reference).
A probability space formed by introducing the uniform probability measure on $\Omega_n$ is referred to as the random binary tree model (or \emph{random model} for short), introduced by Shreve~\cite{Shreve}.
Note that $S_{r,n}$ is a random variable on the random model.

\begin{figure}[t!]\centering
\includegraphics[width=\textwidth]{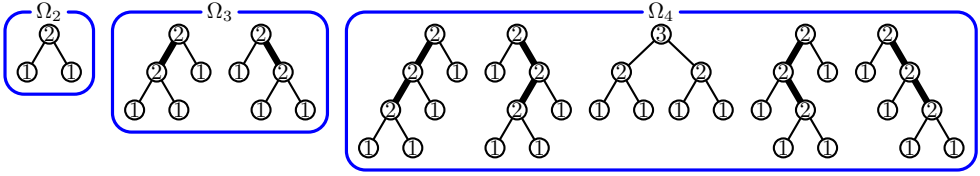}
\caption{
Illustration of $\Omega_2$, $\Omega_3$, and $\Omega_4$, respectively containing one, two, and five binary trees.
Each number on the nodes is Strahler's order.
The bold edges represent branches of order 2.
}
\label{fig1}
\end{figure}

A striking feature common to some stochastic tree models is self-similarity, which means the invariance under the operation of pruning (cutting leaves)~\cite{Peckham}.
A stochastic tree model is said to be Horton self-similar if it satisfies Horton's law, and self-similarity involving side-branching structure is called Tokunaga self-similarity.
The random model, the Tokunaga model~\cite{Tokunaga}, and the random self-similar network~\cite{Veitzer} are well-known self-similar tree models.
For the development and related topics of the self-similarity of random trees, see Kovchegov and Zaliapin~\cite{Kovchegov2019}.

The main subject of this paper is the asymptotic property of $S_{r,n}$.
For any function $f:\mathbb{N}\cup\{0\}\to\mathbb{R}$, $f(S_{r,n}(\cdot))$ is a real-valued random variable on the random model.
By applying the pruning operation, a recursive relation
\begin{equation}
\avg{f(S_{r+1,n})}=\frac{n!(n-1)!(n-2)!}{(2n-2)!}\sum_{m=1}^{\floor{n/2}}\frac{2^{n-2m}}{(n-2m)!m!(m-1)!}\avg{f(S_{r,m})},
\label{eq:connect}
\end{equation}
between the averages of two adjoining orders $r$ and $r+1$ holds~\cite{Yamamoto2010},
where $\avg{\cdot}$ denotes the average on the random model.
In this paper, we mainly study the case where $f$ is an exponential function.

Wang and Waymire~\cite{Wang} proved the central limit theorem for $S_{2,n}$:
\begin{equation}
\sqrt{n}\left(\frac{S_{2,n}}{n}-\frac{1}{4}\right)\cvgdist N\left(0, \frac{1}{16}\right),
\quad n\to\infty,
\label{eq:cltWang}
\end{equation}
where `$\cvgdist$' denotes convergence in distribution, and $N(\mu, \sigma^2)$ is the normal distribution with mean $\mu$ and variance $\sigma^2$.
Recently, Yamamoto~\cite{Yamamoto2017} obtained two generalized forms of Eq.~\eqref{eq:cltWang} as
\begin{equation}
\sqrt{n}\left(\frac{S_{r+1,n}}{n}-\frac{1}{4^r}\right)\cvgdist N\left(0, \frac{4^r-1}{3\cdot16^r}\right),
\quad n\to\infty
\label{eq:cltYamamoto1}
\end{equation}
and
\begin{equation}
\sqrt{n}\left(\frac{S_{r+1,n}}{S_{r,n}}-\frac{1}{4}\right)\cvgdist N(0, 4^{r-3}),
\quad n\to\infty
\label{eq:cltYamamoto2}
\end{equation}
for each $r=1,2,\ldots$
These results are both reduced to Eq.~\eqref{eq:cltWang} when $r=1$.
Equation~\eqref{eq:cltYamamoto1} implies that $S_{r+1,n}/n$ converges in probability to $4^{-r}$ as $n\to\infty$, which is compared with Horton's law.

It is worth pointing out that the central limit theorem~\eqref{eq:cltYamamoto2} can be derived easily by the pruning operation~\cite{Burd, Kovchegov2019}.
Since pruning a binary tree $\tau\in\Omega_n$ $r-1$ times yields a binary tree having $S_{r,n}(\tau)$ leaves,
Eq.~\eqref{eq:cltWang} immediately implies
\[
\sqrt{S_{r,n}}\left(\frac{S_{r+1,n}}{S_{r,n}}-\frac{1}{4}\right)\cvgdist N\left(0, \frac{1}{16}\right),
\quad n\to\infty.
\]
Considering $S_{r,n}/n\to 4^{1-r}$ along with this equation, we obtain the central limit theorem~\eqref{eq:cltYamamoto2}.

As for $S_{2,n}$, the following large deviation theorem was demonstrated~\cite{Wang}.
\begin{theorem}[Large deviation theorem for $S_{2,n}$]
For the random model,
\[
\lim_{n\to\infty}\frac{1}{n}\log P\left(\frac{S_{2,n}}{n}>y\right)=-I(y),\quad y\in\left(\frac{1}{4}, \frac{1}{2}\right)
\]
and
\[
\lim_{n\to\infty}\frac{1}{n}\log P\left(\frac{S_{2,n}}{n}<y\right)=-I(y),\quad y\in\left(0, \frac{1}{4}\right),
\]
where the rate function $I(y)$ is given by
\begin{equation}
I(y)=(4y-1)\tanh^{-1}(4y-1)-\log(\cosh(\tanh^{-1}(4y-1))).
\label{eq:rate}
\end{equation}
\label{thm:LDTWang}
\end{theorem}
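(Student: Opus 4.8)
The plan is to convert the abstract recursion~\eqref{eq:connect} into an explicit description of the law of $S_{2,n}$ and then read off the rate function by a direct Laplace-type estimate. The key first observation is that order-$1$ branches are trivial: since every internal node of a full binary tree has two children and hence order at least $2$, the order-$1$ nodes are exactly the leaves, each forming a singleton branch, so $S_{1,m}=m$ identically. Specializing~\eqref{eq:connect} to $r=1$ with arbitrary $f$ then gives
\[
\avg{f(S_{2,n})}=\frac{n!(n-1)!(n-2)!}{(2n-2)!}\sum_{m=1}^{\floor{n/2}}\frac{2^{n-2m}}{(n-2m)!\,m!\,(m-1)!}\,f(m).
\]
As this holds for every $f$ (and $f\equiv1$ confirms the weights sum to $1$), the bracketed coefficients are precisely the point masses of $S_{2,n}$; writing $w_{n,m}$ for them, I would record $P(S_{2,n}=m)=w_{n,m}$ for $1\le m\le\floor{n/2}$.

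Next I would extract the exponential rate of $w_{n,\floor{yn}}$ by Stirling's formula. Writing each factorial as $k!=\exp(k\log k-k+O(\log k))$ and collecting terms, the $\log n$ contributions from the prefactor and from the denominator cancel exactly, leaving
\[
\lim_{n\to\infty}\frac{1}{n}\log w_{n,\floor{yn}}=-\bigl[(1+2y)\log 2+2y\log y+(1-2y)\log(1-2y)\bigr]=:-I(y),
\]
uniformly for $y$ in compact subsets of $(0,1/2)$. One checks $I(1/4)=0$, consistent with the convergence $S_{2,n}/n\to1/4$ underlying~\eqref{eq:cltWang}.

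To pass from a single term to the tail probabilities I would use the monotonicity of $m\mapsto w_{n,m}$. The ratio $w_{n,m+1}/w_{n,m}=(n-2m)(n-2m-1)/(4m(m+1))$ exceeds $1$ for $m\lesssim n/4$ and drops below $1$ for $m\gtrsim n/4$, so the weights rise to a mode near $n/4$ and then fall. Hence for $y\in(1/4,1/2)$ the sum $P(S_{2,n}/n>y)=\sum_{m>yn}w_{n,m}$ is squeezed between its largest term $w_{n,\floor{yn}+1}$ and $\floor{n/2}$ copies of it; the polynomial factor $\floor{n/2}$ is invisible after taking $\tfrac1n\log$, so the limit equals $-I(y)$. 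The case $y\in(0,1/4)$ is symmetric, using that the weights increase as $m$ approaches the mode from below. The final step is purely algebraic: setting $z=4y-1$ and using $\tanh^{-1}z=\tfrac12\log\frac{1+z}{1-z}$ and $\log\cosh(\tanh^{-1}z)=-\tfrac12\log(1-z^2)$, I would verify that $(4y-1)\tanh^{-1}(4y-1)-\log\cosh(\tanh^{-1}(4y-1))$ collapses to $(1+2y)\log2+2y\log y+(1-2y)\log(1-2y)$, matching~\eqref{eq:rate}.

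The routine-looking but genuinely delicate point is the tail estimate: one must confirm that the upper and lower exponential rates coincide, i.e.\ that neither the $n$-dependent number of summands nor the subexponential Stirling corrections perturb the rate, and that the dominant index stays pinned at the endpoint $m\approx yn$ across the whole tail. This is exactly what the ratio computation secures, and it is where I would concentrate the care; the reduction to the explicit law and the hyperbolic rewriting are then bookkeeping.
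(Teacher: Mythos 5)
Your proposal is correct, and it reaches the theorem by a genuinely different route from the one used here. The paper (following Wang and Waymire) computes the scaled cumulant generating function: it applies a saddle-point estimate to the sum obtained from Eq.~\eqref{eq:connect} with $f(m)=e^{\xi m}$ to get $\frac1n\log\avg{\exp(\xi S_{2,n})}\to\varphi(\xi)=\frac{\xi}{4}+\log\cosh\frac{\xi}{4}$, and then invokes the G\"artner--Ellis-type result of Cox and Griffeath (Theorem~\ref{thm:LDT}) to convert this into the tail limits, with $I$ arising as the Legendre transform of $\varphi$. You instead read off the explicit law $P(S_{2,n}=m)=w_{n,m}$ from Eq.~\eqref{eq:connect} (using $S_{1,m}=m$, correctly justified), estimate $\frac1n\log w_{n,\lfloor yn\rfloor}$ by Stirling, and handle the tails by the unimodality of $m\mapsto w_{n,m}$ (the ratio $w_{n,m+1}/w_{n,m}$ is strictly decreasing and crosses $1$ once near $m=n/4$, so the largest-term squeeze is legitimate and the polynomial number of summands is harmless at exponential scale); the closing algebra correctly identifies $(1+2y)\log 2+2y\log y+(1-2y)\log(1-2y)$ with the hyperbolic form~\eqref{eq:rate}, and these two expressions are of course Legendre duals of the same computation. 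Your route is more elementary and self-contained --- it needs neither Theorem~\ref{thm:LDT} nor any convexity verification --- but it leans on having the distribution of $S_{2,n}$ in closed form together with its unimodality. That is precisely what is unavailable for $S_{r+1,n}$ with $r\ge 2$, which is why the paper's detour through the moment generating function and Eq.~\eqref{eq:connect} is the one that iterates to give Lemma~\ref{mainresult} and Theorem~\ref{thm:main}; your argument, as it stands, is specific to $r=1$.
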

For the proof of this theorem, the following general result on large deviation properties is important.
\begin{theorem}[Cox and Griffeath~\cite{Cox}]
Let $(X_1, X_2.\ldots)$ be a sequence of random variables and let
\[
\varphi_n(\xi) = a_n^{-1}\log\avg{\exp(\xi X_n)},
\]
where $\{a_n\}$ is a sequence of positive numbers such that $a_n\to\infty$.
Assume that on the interval $(\xi_{-},\xi_{+})\ni0$,
\[
\lim_{n\to\infty}\varphi_n(\xi)=\varphi_{\infty}(\xi)<\infty,
\]
where $\varphi_{\infty}(\xi)$ is strictly convex and $C^2$ on $(\xi_{-},\xi_{+})$.
If $\varphi'_n$ is convex on $[0, \xi_{+})$ and $\lim_{n\to\infty}\varphi''_n(0)=\sigma^2=\varphi''_\infty(0)$, then
\[
\lim_{n\to\infty}a_n^{-1}\log P\left(\frac{X_n}{a_n}>y\right)=-I(y),\quad y\in(\mu, \alpha_+)
\]
and
\[
\lim_{n\to\infty}a_n^{-1}\log P\left(\frac{X_n}{a_n}<y\right)=-I(y),\quad y\in(\alpha_-, \mu),
\]
where $\mu=\varphi'_\infty(0)$, $\alpha_-=\varphi'_\infty(\xi_-+)$, $\alpha_+=\varphi'_\infty(\xi_+-)$, and $I(y)$ is the Legendre transform of $\varphi_\infty(\xi)$.
In addition, the central limit theorem
\[
\frac{X_n-\avg{X_n}}{\sqrt{a_n}}\cvgdist N(0, \sigma^2),\quad n\to\infty
\]
holds.
\label{thm:LDT}
\end{theorem}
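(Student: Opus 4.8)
The plan is to read Theorem~\ref{thm:LDT} as a Gärtner--Ellis-type statement: the scaled cumulant generating functions $\varphi_n$ converge to a smooth, strictly convex limit $\varphi_\infty$, and the large deviation estimates then follow from the classical pairing of an exponential Markov bound (giving the upper bound on the tail probabilities) with an exponential change of measure (giving the matching lower bound). Since $\varphi_\infty$ is strictly convex and $C^2$, its derivative $\varphi_\infty'$ is a continuous increasing bijection of $(\xi_-,\xi_+)$ onto $(\alpha_-,\alpha_+)$ with $\varphi_\infty'(0)=\mu$; hence for each $y\in(\mu,\alpha_+)$ there is a unique tilt $\xi^*=\xi^*(y)\in(0,\xi_+)$ with $\varphi_\infty'(\xi^*)=y$, and $I(y)=\xi^* y-\varphi_\infty(\xi^*)$. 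This also explains why $\alpha_\pm=\varphi_\infty'(\xi_\pm\mp)$ bound the range of validity: they are precisely the attainable tilted means.

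First I would prove the upper bound. For $y>\mu$ and any $\xi\in[0,\xi_+)$, Markov's inequality applied to $e^{\xi X_n}$ gives $P(X_n/a_n>y)\le\exp(-a_n(\xi y-\varphi_n(\xi)))$, so $a_n^{-1}\log P(X_n/a_n>y)\le-(\xi y-\varphi_n(\xi))$. Sending $n\to\infty$ and then optimizing over $\xi$ yields $\limsup_n a_n^{-1}\log P(X_n/a_n>y)\le-\sup_{\xi\ge0}(\xi y-\varphi_\infty(\xi))=-I(y)$, with the supremum attained at $\xi^*$ because $y<\alpha_+$. The lower tail is handled identically with negative tilts.

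The heart of the argument is the lower bound, obtained by tilting. Given $y\in(\mu,\alpha_+)$, I would pick $z\in(y,\alpha_+)$, set $\xi^*=(\varphi_\infty')^{-1}(z)>0$, and introduce the tilted law $d\tilde P_n/dP=e^{\xi^* X_n}/\avg{e^{\xi^* X_n}}$. Restricting the integral to a window $\{X_n/a_n\in(z-\delta,z+\delta)\}\subset\{X_n/a_n>y\}$ and bounding $e^{-\xi^* X_n}$ from below there gives $a_n^{-1}\log P(X_n/a_n>y)\ge\varphi_n(\xi^*)-\xi^*(z+\delta)+a_n^{-1}\log\tilde P_n(X_n/a_n\in(z-\delta,z+\delta))$. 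The crux, and the main obstacle, is to show the tilted window probability tends to $1$: under $\tilde P_n$ the variable $X_n/a_n$ has mean $\varphi_n'(\xi^*)$ and variance $\varphi_n''(\xi^*)/a_n$, so a Chebyshev estimate suffices once I know $\varphi_n'(\xi^*)\to z$ and $\varphi_n''(\xi^*)$ stays bounded. This is exactly where the auxiliary hypotheses enter. Since log-moment generating functions are convex, pointwise convergence $\varphi_n\to\varphi_\infty$ forces $\varphi_n'\to\varphi_\infty'$ at every differentiability point of $\varphi_\infty$; assuming in addition that $\varphi_n'$ is convex lets me apply the same convex-convergence lemma one derivative higher to get $\varphi_n''\to\varphi_\infty''$ pointwise. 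Hence $\varphi_n'(\xi^*)\to z$ and $\varphi_n''(\xi^*)\to\varphi_\infty''(\xi^*)<\infty$, the window probability tends to $1$, and letting $\delta\to0$ and $z\downarrow y$ (using continuity of $I$) yields $\liminf_n a_n^{-1}\log P(X_n/a_n>y)\ge-I(y)$, which matches the upper bound.

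Finally, the central limit theorem comes from a second-order expansion at the origin. The moment generating function of $(X_n-\avg{X_n})/\sqrt{a_n}$ equals $\exp(a_n\varphi_n(t/\sqrt{a_n})-\sqrt{a_n}\,\varphi_n'(0)\,t)$, and Taylor expanding $\varphi_n(t/\sqrt{a_n})=\varphi_n'(0)(t/\sqrt{a_n})+\tfrac12\varphi_n''(\theta_n)(t/\sqrt{a_n})^2$ (using $\varphi_n(0)=0$) cancels the linear terms and reduces the exponent to $\tfrac12\varphi_n''(\theta_n)t^2$, with $\theta_n$ between $0$ and $t/\sqrt{a_n}$. Since $\varphi_n'$ is convex, $\varphi_n''$ is nondecreasing, so squeezing between $\varphi_n''(0)\to\sigma^2$ and $\varphi_n''(\eta)\to\varphi_\infty''(\eta)$ (for any fixed small $\eta$, then $\eta\to0$) forces $\varphi_n''(\theta_n)\to\sigma^2$. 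The moment generating function therefore converges to $e^{\sigma^2 t^2/2}$ near $0$, giving $(X_n-\avg{X_n})/\sqrt{a_n}\cvgdist N(0,\sigma^2)$.
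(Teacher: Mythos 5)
The paper does not prove this statement at all: it is quoted verbatim from Cox and Griffeath \cite{Cox} and used as a black box, so there is no internal proof to compare your argument against. Judged on its own, your reconstruction follows the standard G\"artner--Ellis template (Chernoff upper bound, exponential tilting for the lower bound, second-order Taylor expansion of the cumulant generating function for the CLT), and the upper-tail half and the $t>0$ half of the CLT are essentially sound, including the correct order of ``limit in $n$ first, then optimize over $\xi$'' and the correct use of the convex-convergence lemma to upgrade $\varphi_n\to\varphi_\infty$ to $\varphi_n'\to\varphi_\infty'$ and then (using convexity of $\varphi_n'$) to $\varphi_n''\to\varphi_\infty''$ on $[0,\xi_+)$.

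The genuine gap is that the hypotheses are one-sided --- convexity of $\varphi_n'$ is assumed only on $[0,\xi_+)$ --- while two of your steps silently use two-sided regularity. First, your claim that ``the lower tail is handled identically with negative tilts'' does not go through verbatim: for $y\in(\alpha_-,\mu)$ the tilt $\xi^*$ is negative, and your Chebyshev step needs $\varphi_n''(\xi^*)$ to stay bounded, which you obtained from the convex-convergence lemma applied to $\varphi_n'$; on $(\xi_-,0)$ that convexity is not assumed. This is repairable: replace Chebyshev by applying the already-established exponential upper bound to the tilted laws (whose scaled cumulant generating functions converge to $\varphi_\infty(\cdot+\xi^*)-\varphi_\infty(\xi^*)$), which shows $\tilde P_n(|X_n/a_n-z|\ge\epsilon)\to0$ exponentially without any second-derivative control. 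Second, in the CLT you control $\varphi_n''(\theta_n)$ by squeezing between $\varphi_n''(0)$ and $\varphi_n''(\eta)$ using monotonicity of $\varphi_n''$, but for $t<0$ the intermediate point $\theta_n$ lies in $(t/\sqrt{a_n},0)$ where monotonicity is again unavailable; you would need either a separate argument there or an appeal to Curtiss's theorem that convergence of moment generating functions on a one-sided interval $(0,\delta)$ already implies convergence in distribution. With those two repairs the proposal is a correct proof of the quoted theorem.
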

For systematic treatment of large deviation theory, see Ellis~\cite{Ellis} and Deuschel and Stroock~\cite{Deuschel} for example.
Wang and Waymire~\cite{Wang} derived
\begin{equation}
\lim_{n\to\infty}\frac{1}{n}\log\avg{\exp(\xi S_{2,n})}
=\frac{\xi}{4}+\log\left(\cosh\frac{\xi}{4}\right)
=:\varphi(\xi),
\label{eq:phi}
\end{equation}
for any $\xi\in\mathbb{R}$, which leads to the proof of Theorem~\ref{thm:LDTWang} (with $a_n=n$ in Theorem~\ref{thm:LDT}).
The rate function $I(y)$ given in Eq.~\eqref{eq:rate} is the Legendre transform of $\varphi$.
Furthermore, owing to $\varphi''(0)=1/16$, Theorem~\ref{thm:LDTWang} directly implies the central limit theorem~\eqref{eq:cltWang} via Theorem~\ref{thm:LDT}.

Unlike Eq.~\eqref{eq:cltWang}, central limit theorems~\eqref{eq:cltYamamoto1} and \eqref{eq:cltYamamoto2} were obtained by the asymptotic properties of the characteristic functions of $S_{r+1,n}/n$ and $S_{r+1,n}/S_{r,n}$~\cite{Yamamoto2017}.
Thus, a natural problem is to establish the large deviation theorems corresponding to Eqs.~\eqref{eq:cltYamamoto1} and \eqref{eq:cltYamamoto2}.
In this paper, a large deviation theorem connected to Eq.~\eqref{eq:cltYamamoto1} is formulated and proved.

\section{Main result}
\begin{lemma} 
For $r=1,2,\ldots$ and $\xi\in\mathbb{R}$,
\[
\lim_{n\to\infty}\frac{1}{n}\log\avg{\exp(\xi S_{r+1,n})}=\overbrace{\varphi\circ\cdots\circ\varphi}^{r}(\xi)=\varphi^r(\xi),
\]
where the function $\varphi$ is introduced in Eq.~\eqref{eq:phi}.
\label{mainresult}
\end{lemma}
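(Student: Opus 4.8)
The plan is to argue by induction on $r$, using the recursive relation~\eqref{eq:connect} as the engine that advances the order by one. Write $M_{r,n}(\xi):=\avg{\exp(\xi S_{r,n})}$ and let $c_{n,m}$ abbreviate the coefficients appearing in~\eqref{eq:connect}, so that with $f=\exp(\xi\,\cdot)$ the relation becomes $M_{r+1,n}(\xi)=\sum_{m=1}^{\floor{n/2}}c_{n,m}M_{r,m}(\xi)$. The base case $r=1$ is precisely Eq.~\eqref{eq:phi}, namely $n^{-1}\log M_{2,n}(\xi)\to\varphi(\xi)$ for all $\xi$. The preliminary observation I would isolate first is that the order-$1$ count is deterministic, $S_{1,m}=m$ (every internal node has order at least $2$, so each leaf is an isolated order-$1$ branch); substituting $r=1$, $f=\exp(\theta\,\cdot)$ into~\eqref{eq:connect} then yields the identity
\[
M_{2,n}(\theta)=\sum_{m=1}^{\floor{n/2}}c_{n,m}\,e^{\theta m},\qquad\theta\in\mathbb{R}.
\]
In other words, the weights $c_{n,m}$, summed against $e^{\theta m}$, reproduce the order-$2$ moment generating sum; this identity, together with the base case, is what drives the whole induction.

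For the inductive step I would fix $\xi$, set $\eta:=\varphi^{r-1}(\xi)$, and assume $m^{-1}\log M_{r,m}(\xi)\to\eta$. Since $M_{r,m}(\xi)$ is an expectation of a strictly positive random variable it is positive for every $m$, and combined with the convergence of $m^{-1}\log M_{r,m}(\xi)$ this produces, for each $\epsilon>0$, two-sided envelopes
\[
c_\epsilon\,e^{m(\eta-\epsilon)}\le M_{r,m}(\xi)\le C_\epsilon\,e^{m(\eta+\epsilon)},\qquad m\ge1,
\]
with constants $c_\epsilon,C_\epsilon>0$ independent of $n$. Inserting these into $M_{r+1,n}(\xi)=\sum_m c_{n,m}M_{r,m}(\xi)$ and applying the generating identity gives the sandwich $c_\epsilon M_{2,n}(\eta-\epsilon)\le M_{r+1,n}(\xi)\le C_\epsilon M_{2,n}(\eta+\epsilon)$. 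Taking $n^{-1}\log$, letting $n\to\infty$, and invoking the base case at the arguments $\eta\pm\epsilon$ makes the constants vanish and yields $\varphi(\eta-\epsilon)\le\liminf\le\limsup\le\varphi(\eta+\epsilon)$; sending $\epsilon\to0$ and using the continuity of $\varphi$ gives $n^{-1}\log M_{r+1,n}(\xi)\to\varphi(\eta)=\varphi^{r}(\xi)$, completing the induction.

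The hard part, I expect, will be the passage from the pointwise-in-$m$ induction hypothesis to a bound uniform over the entire summation range $1\le m\le\floor{n/2}$: the recursion mixes $M_{r,m}(\xi)$ across all these indices, whereas the hypothesis only controls the exponential growth rate for large $m$. The resolution is the elementary fact that a positive sequence with a convergent exponential rate admits the clean two-sided envelopes displayed above, with the finitely many small-$m$ terms swallowed into $n$-independent multiplicative constants that contribute nothing to $n^{-1}\log(\cdot)$. It is worth checking that no tail or summability pathology intrudes, but here this is automatic: $\varphi$ is finite and continuous on all of $\mathbb{R}$ (as $\cosh>0$), so each comparison quantity $M_{2,n}(\eta\pm\epsilon)$ is a genuine finite sum for every real argument, and the argument goes through for arbitrary $\xi\in\mathbb{R}$ irrespective of sign.
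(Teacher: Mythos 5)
Your proposal is correct, and it takes a genuinely different route through the inductive step than the paper does. Both arguments start from the recursion~\eqref{eq:connect} together with the observation that $S_{1,m}=m$ is deterministic, so that the coefficients reproduce the order-$2$ moment generating function when summed against $e^{\theta m}$. The paper, however, re-runs a saddle-point (Laplace) analysis at every stage: it proves the base case by maximizing $g(\beta;\xi)$, then assumes $\avg{\exp(\xi S_{r,n})}\sim C_{r,n}\exp(\varphi^{r-1}(\xi)n)$ with subexponential $C_{r,n}$ and repeats the same Laplace estimate with $\xi$ replaced by $\varphi^{r-1}(\xi)$, asserting that the prefactor $C_{r,\beta n}$ ``does not affect the saddle-point method.'' Your sandwich argument replaces this entire step: the two-sided envelopes $c_\epsilon e^{m(\eta-\epsilon)}\le M_{r,m}(\xi)\le C_\epsilon e^{m(\eta+\epsilon)}$ (valid for all $m\ge1$ after absorbing finitely many small-$m$ terms into the constants, since each $M_{r,m}(\xi)>0$) reduce the order-$(r+1)$ quantity to the already-established order-$2$ limit at the shifted arguments $\eta\pm\epsilon$, and continuity of $\varphi$ finishes the job. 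What your approach buys is rigor precisely where the paper is informal --- the uniform control of the subexponential corrections across the summation range $1\le m\le\floor{n/2}$ is exactly the point your envelopes handle cleanly, whereas the paper's ``$\sim$'' manipulation of $C_{r,\beta n}$ inside the sum is the weakest link of its induction. What the paper's approach buys is sharper information: the Laplace method yields the full asymptotic prefactor of $\avg{\exp(\xi S_{2,n})}$, not just the exponential rate, though only the rate is needed for the lemma. One small remark: you invoke Eq.~\eqref{eq:phi} as a known base case, which is legitimate since the paper attributes it to Wang and Waymire, but note that the paper deliberately re-derives it from~\eqref{eq:connect} because that derivation is what it then recycles for the inductive step; in your scheme the re-derivation is unnecessary, since the generating identity lets the base case itself carry all the asymptotic information about the coefficients.
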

This is a generalized result of Eq.~\eqref{eq:phi}.
We give the proof of this lemma in the next section.

By Lemma~\ref{mainresult}, we can prove the following large deviation theorem for $S_{r+1,n}$.
\begin{theorem}[Large deviation theorem for $S_{r+1,n}$]
For $r=1,2,\ldots$,
\[
\lim_{n\to\infty}\frac{1}{n}\log P\left(\frac{S_{r+1, n}}{n}>y\right)=-I_r(y),\quad y\in\left(\frac{1}{4^r},\frac{1}{2^r}\right)
\]
and
\[
\lim_{n\to\infty}\frac{1}{n}\log P\left(\frac{S_{r+1, n}}{n}<y\right)=-I_r(y),\quad y\in\left(0, \frac{1}{4^r}\right),
\]
where the rate function $I_r(y)$ is the Legendre transform of $\varphi^r(\xi)$.
\label{thm:main}
\end{theorem}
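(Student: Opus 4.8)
The plan is to apply the Cox--Griffeath theorem (Theorem~\ref{thm:LDT}) to the sequence $X_n=S_{r+1,n}$ with the scaling $a_n=n$, so that $\varphi_n(\xi)=\frac1n\log\avg{\exp(\xi S_{r+1,n})}$. Lemma~\ref{mainresult} already supplies the central hypothesis, the pointwise limit $\varphi_n(\xi)\to\varphi^r(\xi)=:\varphi_\infty(\xi)$, and since that convergence holds for every $\xi\in\mathbb{R}$ one may take the interval $(\xi_-,\xi_+)=(-\infty,+\infty)$. It then remains to verify the three regularity requirements of Theorem~\ref{thm:LDT}: that $\varphi_\infty$ is strictly convex and $C^2$; that $\varphi_n'$ satisfies the stated convexity condition on $[0,\infty)$; and that $\varphi_n''(0)\to\varphi_\infty''(0)$. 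Once these are in place, Theorem~\ref{thm:LDT} delivers the two tail estimates with rate function $I_r$ equal to the Legendre transform of $\varphi^r$, and identifies the endpoints $\alpha_\pm=\varphi_\infty'(\pm\infty)$ of the relevant ranges.

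First I would settle the structural properties of $\varphi_\infty=\varphi^r$. The building block $\varphi$ is $C^\infty$ with $\varphi'(\xi)=\frac14+\frac14\tanh\frac\xi4>0$ and $\varphi''(\xi)=\frac{1}{16\cosh^2(\xi/4)}>0$, so $\varphi$ is strictly increasing and strictly convex on $\mathbb{R}$. These properties are stable under composition: if $h$ is $C^2$, strictly increasing, and strictly convex, then $(\varphi\circ h)'(\xi)=\varphi'(h(\xi))\,h'(\xi)>0$ and $(\varphi\circ h)''(\xi)=\varphi''(h(\xi))\,h'(\xi)^2+\varphi'(h(\xi))\,h''(\xi)>0$, so $\varphi\circ h$ is again strictly increasing, strictly convex, and $C^2$. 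Induction on $r$ therefore shows $\varphi^r$ is strictly increasing, strictly convex, and $C^\infty$ on $\mathbb{R}$. The same chain-rule computation, run in the limits $\xi\to\pm\infty$ using $\varphi'(\xi)\to\frac12$ and $\varphi'(\xi)\to0$, gives $\alpha_+=\varphi_\infty'(+\infty)=2^{-r}$ and $\alpha_-=\varphi_\infty'(-\infty)=0$, which are exactly the endpoints appearing in the statement; the mean is $\mu=\varphi_\infty'(0)=\varphi'(0)^r=4^{-r}$ because $\varphi(0)=0$ makes the origin a fixed point of $\varphi$.

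For the normalization of the second derivative I would use $\varphi_n''(0)=\frac1n\mathrm{Var}(S_{r+1,n})$. Differentiating the composition at the fixed point $0$ gives $\varphi_\infty''(0)=(\varphi^r)''(0)=\frac{4^r-1}{3\cdot16^r}$, which coincides with the limiting normalized variance established in the course of proving the central limit theorem~\eqref{eq:cltYamamoto1}; hence $\varphi_n''(0)\to\varphi_\infty''(0)=:\sigma^2$. As a by-product, the central-limit conclusion of Theorem~\ref{thm:LDT} reproduces~\eqref{eq:cltYamamoto1}, which serves as a consistency check.

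The main obstacle is the remaining hypothesis, the convexity condition on $\varphi_n'$ over $[0,\infty)$, because this is a statement about the finite-$n$ cumulant generating functions that cannot be read off from the pointwise limit in Lemma~\ref{mainresult}; equivalently, it is a sign condition on the tilted third cumulant $\frac{d^3}{d\xi^3}\log\avg{\exp(\xi S_{r+1,n})}$ for $\xi\ge0$. For $r=1$ the distribution of $S_{2,n}$ is given explicitly by specializing the recursion~\eqref{eq:connect} (using $S_{1,m}\equiv m$), and the required property can be checked directly from that closed form, as in Wang and Waymire~\cite{Wang}. For general $r$ I would argue inductively through~\eqref{eq:connect}, which expresses $\avg{\exp(\xi S_{r+1,n})}$ as a mixture $\sum_m p_{n,m}\avg{\exp(\xi S_{r,m})}$ with $\xi$-independent weights $p_{n,m}$. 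The difficulty is that taking $\frac1n\log$ of such a mixture does not by itself preserve the one-sided convexity of the derivative, so the inductive step will require genuine quantitative control of the tilted third cumulant on $[0,\infty)$ rather than a soft monotonicity argument; securing that estimate is where I expect the real work of the proof to lie.
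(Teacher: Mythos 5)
Your proposal follows essentially the same route as the paper: apply the Cox--Griffeath theorem with $a_n=n$, take $\varphi_\infty=\varphi^r$ from Lemma~\ref{mainresult}, verify that $\varphi^r$ is strictly increasing, strictly convex and $C^2$ by induction on the composition, and read off $\mu=4^{-r}$, $\alpha_-=0$, $\alpha_+=2^{-r}$ from $\varphi'(\xi)=[1+\tanh(\xi/4)]/4$ and the fixed point $\varphi(0)=0$. The one hypothesis you single out as the remaining obstacle --- the convexity of $\varphi_n'$ on $[0,\infty)$ for finite $n$ (and, likewise, the convergence $\varphi_n''(0)\to\varphi_\infty''(0)$) --- is in fact not verified in the paper's own proof either, which checks only the properties of the limit function $\varphi^r$ and the endpoint values before declaring the proof complete; so your flagging of that step identifies a genuine omission in the published argument rather than a defect peculiar to your approach.
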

Note that this theorem includes Theorem~\ref{thm:LDTWang} as a special case of $r=1$.

\begin{proof}
We can complete the proof by substituting $\varphi^r(\xi)$ in Lemma~\ref{mainresult} for $\varphi_\infty(\xi)$ in Theorem~\ref{thm:LDT}.
Since the function $\varphi(\xi)$ is strictly increasing, strictly convex and $C^2$ on $\mathbb{R}$, its composite $\varphi^r(\xi)$ also possesses these properties.
Hence, $\xi_\pm=\pm\infty$ for any $r$.
By the chain rule and $\varphi'(\xi)=[1+\tanh(\xi/4)]/4$, the derivative of $\varphi^r(\xi)$ is
\begin{equation}
(\varphi^r)'(\xi)
=\prod_{j=0}^{r-1}\varphi'(\varphi^j(\xi))
=\prod_{j=0}^{r-1}\frac{1+\tanh(\varphi^j(\xi)/4)}{4}.
\label{eq:dphir}
\end{equation}
Owing to $\varphi(0)=0$ and $\varphi(\pm\infty)=\pm\infty$, we obtain
\[
\mu=(\varphi^r)'(0)=\frac{1}{4^r},\quad
\alpha_-=(\varphi^r)'(-\infty)=0,\quad
\alpha_+=(\varphi^r)'(\infty)=\frac{1}{2^r}.
\]
Therefore, the proof is complete.
\end{proof}

\begin{rem}
As a consequence of Theorem~\ref{thm:main}, the central limit theorem~\eqref{eq:cltYamamoto1} holds straightforwardly from Lemma~\ref{mainresult} and Theorem~\ref{thm:LDT}.
By differentiating Eq.~\eqref{eq:dphir} again and applying the Leibniz rule, the second derivative of $\varphi^r$ is
\begin{align}
(\varphi^r)''(\xi)
&=\varphi'(\varphi^{r-1}(\xi))\cdots\varphi'(\xi)\sum_{k=0}^{r-1}\frac{(\varphi'\circ\varphi^k)'(\xi)}{\varphi'(\varphi^k(\xi))}\notag\\
&=(\varphi^r)'(\xi)\sum_{k=0}^{r-1}\frac{\varphi''(\varphi^k(\xi))}{\varphi'(\varphi^k(\xi))}\prod_{l=0}^{k-1}\varphi'(\varphi^l(\xi)).
\label{eq:ddphir}
\end{align}
Using $\varphi(0)=0$, $\varphi'(0)=1/4$, $\varphi''(0)=1/16$, and ($\varphi^r)'(0)=4^{-r}$, we obtain
\[
(\varphi^r)''(0)=\frac{4^r-1}{3\cdot16^r}.
\]
Thus, the central limit theorem~\eqref{eq:cltYamamoto1} is derived.
\end{rem}

Lemma~\ref{mainresult} and Theorem~\ref{thm:main} indicate that the order $r$ appears in the composition $\varphi^r$.
The author believes that this regularity implies self-similarity of trees from the perspective of large deviation theory.

A large deviation formalism of Eq.~\eqref{eq:cltYamamoto2} is not studied in this paper, and is an open problem.

\section{Proof of Lemma~\ref{mainresult}}
This section is mainly devoted to the proof of Lemma~\ref{mainresult}.

First, we show Lemma~\ref{mainresult} for $r=1$ (corresponding to $S_{2,n}$).
This case~\eqref{eq:phi} was already proved by Wang and Waymire~\cite{Wang}, but we employ a formula different from theirs.
Our method has the major advantage that we can easily extend to $r\ge2$.

We need to estimate $\avg{\exp(\xi S_{2,n})}$, which is the moment generating function of $S_{2,n}$.
By setting $r=1$ and $f(S_{r,n})=\exp(\xi S_{r,n})$ in Eq.~\eqref{eq:connect},
\[
\avg{\exp(\xi S_{2,n})}=\frac{n!(n-1)!(n-2)!}{(2n-2)!}\sum_{m=1}^{\floor{n/2}}\frac{2^{n-2m}}{(n-2m)!m!(m-1)!} e^{\xi m}.
\]
This sum can be calculated exactly using the Gauss hypergeometric function~\cite{Yamamoto2008},
but here we perform an asymptotic analysis using a saddle-point method.

Letting $m=\beta n$ ($0<\beta<1/2$) to replace the sum by integral about $\beta$, and using Stirling's approximation
\[
N!\sim\sqrt{2\pi N}\left(\frac{N}{e}\right)^N,
\]
we get
\begin{align}
\avg{\exp(\xi S_{2,n})}&\sim\frac{2}{\sqrt{\pi n}}\int_0^{1/2}\frac{1}{\sqrt{1-2\beta}}\left(\frac{e^{\beta\xi}}{2(1-2\beta)^{1-2\beta}\beta^{2\beta}4^\beta}\right)^n d \beta \notag\\
&=\frac{2}{\sqrt{\pi n}}\int_0^{1/2}\frac{1}{\sqrt{1-2\beta}}\exp\left(n g(\beta;\xi)\right)d \beta,
\label{eq:saddlepoint}
\end{align}
where
\[
g(\beta; \xi):=\xi \beta-(1-2\beta)\log(1-2\beta)-2\beta\log \beta-\beta\log 4-\log2,
\]
and `$\sim$' denotes the asymptotic equality in the sense that the ratio between both hand sides tends to unity as $n\to\infty$.
One can easily confirm that the function $g(\beta; \xi)$ takes a maximum value at
\[
\beta_0=\frac{e^{\xi/4}}{4\cosh(\xi/4)},
\]
thereby
\[
\avg{\exp(\xi S_{2,n})}\sim \frac{2}{\sqrt{\pi}n}\frac{1}{\sqrt{1-2\beta_0}}\exp(n g(\beta_0; \xi))\sqrt{\frac{2\pi}{-g''(\beta_0; \xi)}}.
\]
Therefore,
\[
\lim_{n\to\infty}\frac{1}{n}\log\avg{\exp(\xi S_{2,n})}=g(\beta_0; \xi)=\frac{\xi}{4}+\log\left(\cosh\frac{\xi}{4}\right).
\]

\begin{rem}
Equation~\eqref{eq:phi} was previously obtained~\cite{Wang} by using a saddle-point method to
\begin{align*}
\avg{\exp(\xi S_{2,n})}&=\sum_{k=0}^{\floor{n/2}}\frac{(n-k)!\vert\Omega_{n-k}\vert(e^\xi-1)^k}{(n-2k)!\vert\Omega_n\vert k!}\\
&=\frac{n!(n-1)!}{(2n-2)!}\sum_{k=0}^{\floor{n/2}}\frac{(2n-2k-2)!(e^\xi-1)^k}{(n-2k)!(n-k-1)!k!}.
\end{align*}
However, as noted in \cite{Wang}, this procedure needs to treat the two cases where $e^\xi-1$ is positive and where $e^\xi-1$ is negative separately.
Moreover, it seems to be difficult to extend their method to general $S_{r+1,n}$.
In this light, our method, starting with Eq.~\eqref{eq:connect}, is advantageous compared to the preceding one.
\end{rem}

Next, we proceed to general $S_{r+1,n}$ by induction on $r$.
Assume that
\begin{equation}
\avg{\exp(\xi S_{r,n})}\sim C_{r,n}\exp(\varphi^{r-1}(\xi)n),
\label{eq:induction}
\end{equation}
where the coefficient $C_{r,n}$ satisfies
\[
\lim_{n\to\infty}\frac{1}{n}\log C_{r,n}=0,
\]
and we show Eq.~\eqref{eq:induction} for $r+1$.
By Eq.~\eqref{eq:connect} and asymptotic approximation as above, we have
\begin{align*}
\avg{\exp(\xi S_{r+1,n})}&\sim\frac{n!(n-1)!(n-2)!}{(2n-2)!}\sum_{m=1}^{\floor{n/2}}\frac{2^{n-2m}}{(n-2m)!m!(m-1)!}C_{r,m}\exp(\varphi^{r-1}(\xi) m)\\
&\sim\frac{2}{\sqrt{\pi n}}\int_0^{1/2}\frac{C_{r,\beta n}}{\sqrt{1-2\beta}}\exp\left(n g(\beta;\varphi^{r-1}(\xi))\right)d \beta.
\end{align*}
The saddle-point estimation requires to maximize the same function $g$ as in Eq.~\eqref{eq:saddlepoint}, but $\xi$ in Eq.~\eqref{eq:saddlepoint} is replaced by $\varphi^{r-1}(\xi)$ here.
We also note that the coefficient $C_{r,\beta n}$ does not affect the saddle-point method.
Hence, for some coefficient $C_{r+1,n}$, we have
\[
\avg{\exp(\xi S_{r+1,n})}\sim C_{r+1,n}\exp(\varphi^r(\xi)n),
\]
so that
\[
\lim_{n\to\infty}\frac{1}{n}\log\avg{n\exp(\xi S_{r+1,n})}=\varphi^r(\xi).
\]
Thus, the statement holds for any $r$.

\section{Note on approximate forms of the rate function}
Unfortunately, the rate function $I_r(y)$ in Theorem~\ref{thm:main} cannot be expressed exactly for $r\ge2$.
By the definition of the Legendre transformation, $I_r(y)$ is given by
\begin{equation}
I_r(y)=y\xi_r^\ast(y)-\varphi^r(\xi_r^\ast(y)),
\label{eq:defI}
\end{equation}
where $\xi_r^\ast(y)$ satisfies
\[
(\varphi^r)'(\xi_r^\ast(y))=y.
\]
In short, $\xi_r^\ast$ is the inverse function of $(\varphi^r)'$.
The difficulty for $I_r(y)$ is due to the fact that $(\varphi^r)'$ has a complicated form for $r\ge2$ and $\xi_r^\ast(y)$ cannot be solved explicitly.
Instead of the exact form of $I_r(y)$, we derive its approximate forms.

According to the general theory of rate functions~\cite{Ellis}, $I_r(y)$ is convex and takes the minimum value 0 at $y=(\varphi^r)'(0)=4^{-r}$.
Moreover, the derivative of Eq.~\eqref{eq:defI} yields
\begin{equation}
I_r'(y)=\xi_r^\ast(y),\quad
I_r''(y)=(\xi_r^\ast)'(y)=\frac{1}{(\varphi^r)''(\xi_r^\ast(y))}.
\label{eq:derivativeI}
\end{equation}
Owing to $I_r(4^{-r})=0$ and $I_r'(4^{-r})=0$, a second-order Taylor expansion of $I_r$ around $y=4^{-r}$ becomes
\begin{equation}
I_r\left(\frac{1}{4^r}+\eta\right)=\frac{1}{2(\varphi^r)''(0)}\eta^2+O(\eta^3)
=\frac{3\cdot 16^r}{2(4^r-1)}\eta^2+O(\eta^3).
\label{eq:parabola}
\end{equation}
In other words, the bottom of the curve of $I_r(y)$ is approximated by a parabola, and this is equivalent to the central limit theorem~\eqref{eq:cltYamamoto1}.
Differentiating Eq.~\eqref{eq:parabola}, we have
\begin{equation}
\xi_r^\ast\left(\frac{1}{4^r}+\eta\right)=I_r'\left(\frac{1}{4^r}+\eta\right)
=\frac{3\cdot16^r}{4^r-1}\eta+O(\eta^2).
\label{eq:xicenter}
\end{equation}

\begin{figure}\centering
\raisebox{40mm}{(a)}
\includegraphics[scale=0.9]{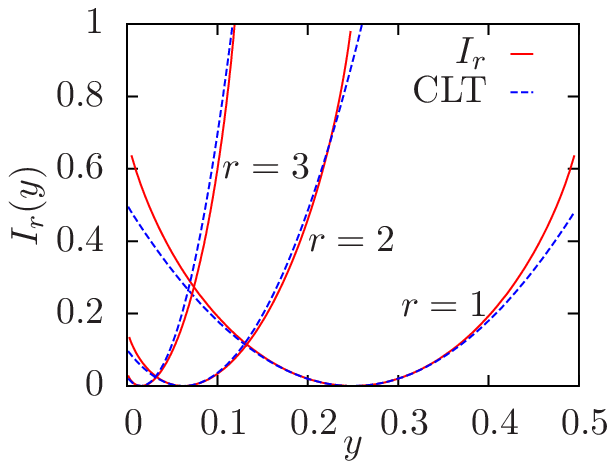}
\hspace{5mm}
\raisebox{40mm}{(b)}
\includegraphics[scale=0.9]{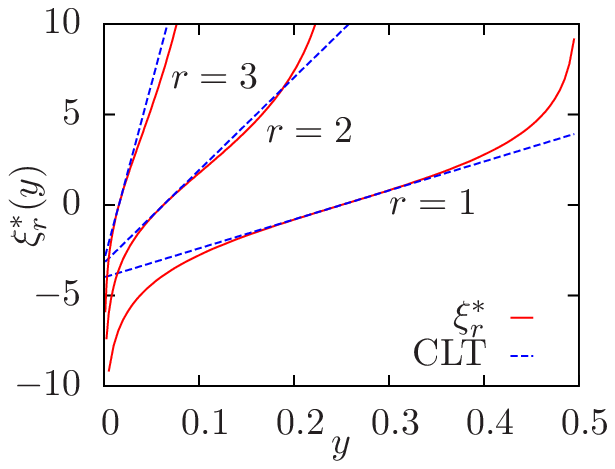}
\caption{
Numerical results. (a) The rate function $I_r(y)$ for $r=1$, 2, and 3 (solid curves)
and parabola~\eqref{eq:parabola} corresponding to the central limit theorem (dashed curves titled ``CLT'').
(b) $\xi_r^\ast(y)$ for $r=1$, 2, and 3 (solid curves) and the approximate line~\eqref{eq:xicenter} around $y=4^{-r}$ at which $I_r(y)$ takes the minimum.
}
\label{fig2}
\end{figure}

Figure~\ref{fig2} shows numerical results of $I_r(y)$ and $\xi_r^\ast(y)$ for $r=1$, 2, and 3 by the solid curves.
We used the Newton-Raphson method to solve $\xi_r^\ast(y)$.
Approximate forms \eqref{eq:parabola} and \eqref{eq:xicenter} corresponding to the central limit theorem are shown by dashed curves and lines.
The dashed curves are close to the solid ones only in the vicinity of $y=4^{-r}$.
In what follows, we calculate approximate forms of $I_r(y)$ near $y=0$ (leftmost point) and $2^{-r}$ (rightmost point).

\begin{definition}
For simplicity of notation, we introduce
\[
\Psi(X):=\frac{\sqrt{X}+1}{2}
\]
for $X\in[0,\infty)$.
\end{definition}

\begin{prop}
$\varphi$ and $\Psi$ possess the following properties.
\begin{enumerate}
\item $\varphi(\log X)=\log \Psi(X),\quad \varphi^k(\log X)=\log\Psi^k(X)$
\item $\varphi'(\log X)=\frac{\sqrt{X}}{4\Psi(X)}$
\item $\varphi''(\log X)=\frac{\sqrt{X}}{16\Psi(X)^2}$
\end{enumerate}
\label{prop:Psi}
\end{prop}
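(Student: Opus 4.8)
The plan is to verify all three identities by direct substitution of $\xi=\log X$ into the explicit expressions for $\varphi$, $\varphi'$, and $\varphi''$ already recorded in the paper, and then to promote the first identity to its iterated form by induction on the number of compositions. The unifying observation is that the substitution $\xi=\log X$ turns each exponential $e^{\pm\xi/4}$ into $X^{\pm1/4}$, so every hyperbolic function of $\xi/4$ collapses into a rational expression in $X^{1/4}$ that matches $\Psi$.

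First I would prove the base case of part~1. Setting $\xi=\log X$ in $\varphi(\xi)=\xi/4+\log(\cosh(\xi/4))$ and writing $\cosh(\log X/4)=(X^{1/4}+X^{-1/4})/2$, I would absorb the linear term $\log X/4=\log X^{1/4}$ into the logarithm. The product $X^{1/4}\cdot(X^{1/4}+X^{-1/4})/2=(\sqrt{X}+1)/2=\Psi(X)$ collapses exactly, giving $\varphi(\log X)=\log\Psi(X)$. The iterated identity $\varphi^k(\log X)=\log\Psi^k(X)$ then follows by induction on $k$: assuming it for $k-1$ and applying the base case with $X$ replaced by $\Psi^{k-1}(X)$ yields $\varphi^k(\log X)=\varphi(\log\Psi^{k-1}(X))=\log\Psi(\Psi^{k-1}(X))=\log\Psi^k(X)$.

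For part~2 I would start from the formula $\varphi'(\xi)=[1+\tanh(\xi/4)]/4$ used in the proof of Theorem~\ref{thm:main}. Substituting $\xi=\log X$ and writing $\tanh(\log X/4)=(X^{1/4}-X^{-1/4})/(X^{1/4}+X^{-1/4})$, the factor $1+\tanh(\log X/4)$ simplifies to $2X^{1/4}/(X^{1/4}+X^{-1/4})$; multiplying numerator and denominator by $X^{1/4}$ converts this to $2\sqrt{X}/(\sqrt{X}+1)$, and dividing by $4$ gives $\sqrt{X}/(4\Psi(X))$. For part~3, differentiating $\varphi'$ once more gives $\varphi''(\xi)=1/(16\cosh^2(\xi/4))$; substituting $\xi=\log X$ and using $\cosh^2(\log X/4)=(X^{1/4}+X^{-1/4})^2/4$, then multiplying numerator and denominator by $\sqrt{X}$, reduces the expression to $\sqrt{X}/[4(X+2\sqrt{X}+1)]$, which equals $\sqrt{X}/(16\Psi(X)^2)$ since $16\Psi(X)^2=4(\sqrt{X}+1)^2=4(X+2\sqrt{X}+1)$.

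None of the three parts is conceptually difficult: each is a one-line collapse once $e^{\pm\xi/4}$ is rewritten as $X^{\pm1/4}$, and the algebra in parts~2 and~3 is entirely routine. The only point requiring genuine care is the induction in part~1, where the base identity must be applied at the argument $\Psi^{k-1}(X)$ rather than at $X$ itself; keeping this shift of argument straight is the main thing to watch, and once it is handled the iterated formula is immediate.
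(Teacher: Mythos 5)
Your proposal is correct and follows essentially the same route as the paper: both rewrite $\varphi$, $\varphi'$, $\varphi''$ so that the substitution $\xi=\log X$ turns the hyperbolic expressions into rational functions of $\sqrt{X}$ (the paper via $e^{\xi/2}$, you via $X^{\pm1/4}$, which is the same manipulation), and the iterated identity follows by the induction you describe. The paper merely leaves these routine steps to the reader, so your write-up is just a more explicit version of its proof.
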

\begin{proof}
One can easily prove this by using the following relations:
\begin{align*}
\varphi(\xi)&=\frac{\xi}{4}+\log\left(\cosh\frac{\xi}{4}\right)
=\log\left(\frac{e^{\xi/2}+1}{2}\right),\\
\varphi'(\xi)&=\frac{1}{4}+\frac{1}{4}\tanh\frac{\xi}{4}
=\frac{e^{\xi/2}}{2(e^{\xi/2}+1)},\\
\varphi''(\xi)&=\frac{1}{16\cosh^2(\xi/4)}=\frac{e^{\xi/2}}{4(e^{\xi/2}+1)^2}.
\end{align*}
\end{proof}

\begin{prop}
By using $\Psi$, the first and second derivatives of $\varphi^r$ are respectively expressed as
\begin{equation}
(\varphi^r)'(\xi)=\frac{e^{\xi/2}}{4^r}\left[\Psi^r(e^\xi)\prod_{k=1}^r \Psi^k(e^\xi)\right]^{-1/2}
\label{eq:derivativePsi}
\end{equation}
and
\begin{equation}
(\varphi^r)''(\xi)=(\varphi^r)'(\xi)e^{\xi/2}\sum_{l=0}^{r-1}\frac{1}{4^{l+1}\Psi^{l+1}(e^\xi)}\left[\Psi^l(e^\xi)\prod_{k=1}^l \Psi^k(e^\xi)\right]^{-1/2}.
\label{eq:dderivativePsi}
\end{equation}
\label{prop:derivativePsi}
\end{prop}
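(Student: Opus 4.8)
The plan is to evaluate the general chain-rule expressions \eqref{eq:dphir} and \eqref{eq:ddphir} at $\xi=\log X$ (equivalently $X=e^\xi$) and to translate every factor into the language of $\Psi$ via Proposition~\ref{prop:Psi}. Since both target identities are written in terms of $e^\xi$ and the iterates $\Psi^k(e^\xi)$, working with the variable $X=e^\xi$ eliminates all hyperbolic functions and reduces the task to algebraic manipulation of products; the substitution $e^{\xi/2}=\sqrt{X}$ will account for the prefactors.

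First I would establish \eqref{eq:derivativePsi}. Starting from \eqref{eq:dphir}, I substitute $\xi=\log X$ and use property~1 of Proposition~\ref{prop:Psi} to write $\varphi^j(\log X)=\log\Psi^j(X)$, so that each factor becomes $\varphi'(\log\Psi^j(X))$. Applying property~2 with argument $\Psi^j(X)$ and using $\Psi(\Psi^j(X))=\Psi^{j+1}(X)$ gives
\[
\varphi'(\varphi^j(\log X))=\frac{\sqrt{\Psi^j(X)}}{4\,\Psi^{j+1}(X)}.
\]
Taking the product over $j=0,\dots,r-1$ and pulling out $4^{-r}$, the numerators combine into $\bigl(\prod_{j=0}^{r-1}\Psi^j(X)\bigr)^{1/2}$ and the denominators into $\prod_{j=0}^{r-1}\Psi^{j+1}(X)=\prod_{k=1}^{r}\Psi^k(X)$ after reindexing $k=j+1$. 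Writing the whole quantity as a single $(-1/2)$-power, using $\Psi^0(X)=X$ and $e^{\xi/2}=\sqrt{X}$, and performing a short cancellation (splitting one copy $\prod_{k=1}^r\Psi^k(X)=\Psi^r(X)\prod_{k=1}^{r-1}\Psi^k(X)$ against the surviving numerator factor $\prod_{k=1}^{r-1}\Psi^k(X)$) reproduces exactly the bracket $\Psi^r(e^\xi)\prod_{k=1}^r\Psi^k(e^\xi)$ in \eqref{eq:derivativePsi}.

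Next I would prove \eqref{eq:dderivativePsi} from \eqref{eq:ddphir}. The key observation is that the inner product $\prod_{l=0}^{k-1}\varphi'(\varphi^l(\xi))$ appearing in \eqref{eq:ddphir} is precisely $(\varphi^k)'(\xi)$ by \eqref{eq:dphir}, so the first-derivative identity \eqref{eq:derivativePsi} (just established) can be reinserted verbatim. For the remaining ratio I evaluate at $\xi=\log X$ and combine properties~2 and 3: with $Y=\Psi^k(X)$ the ratio simplifies to
\[
\frac{\varphi''(\varphi^k(\log X))}{\varphi'(\varphi^k(\log X))}=\frac{1}{4\,\Psi^{k+1}(X)}.
\]
Substituting this together with the expression for $(\varphi^k)'$ into \eqref{eq:ddphir}, factoring out the common $(\varphi^r)'(\xi)\,e^{\xi/2}$, and renaming the summation index $k\mapsto l$ yields \eqref{eq:dderivativePsi}.

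The computation is almost entirely mechanical, and I expect the only delicate point to be the index bookkeeping in the first-derivative step: keeping track of the shift $\Psi^j\mapsto\Psi^{j+1}$ produced by one application of $\Psi$, and reindexing the products so that the half-powers collapse into the single bracket. Once \eqref{eq:derivativePsi} is secured in this form, the second-derivative identity follows immediately, since the inner product in \eqref{eq:ddphir} is exactly the quantity already computed and the auxiliary ratio reduces to a single reciprocal factor.
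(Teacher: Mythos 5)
Your proposal is correct and follows essentially the same route as the paper: evaluate the chain-rule formula \eqref{eq:dphir} at $\xi=\log X$, convert each factor via Proposition~\ref{prop:Psi} into $\sqrt{\Psi^j(X)}/(4\Psi^{j+1}(X))$, and collapse the telescoping product into the stated bracket; for \eqref{eq:dderivativePsi} the paper likewise just invokes \eqref{eq:ddphir} and Proposition~\ref{prop:Psi}, and your extra observation that the inner product equals $(\varphi^k)'(\xi)$ together with the simplification $\varphi''/\varphi'=1/(4\Psi^{k+1}(X))$ is exactly the intended (and correct) way to make that step explicit.
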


\begin{proof}
By Eq.~\eqref{eq:dphir} and Proposition~\ref{prop:Psi}, $(\varphi^r)'$ is written as
\begin{align*}
(\varphi^r)'(\xi)
&=\varphi'(\log\Psi^{r-1}(e^\xi))\varphi'(\log\Psi^{r-2}(e^\xi))\cdots\varphi'(\log e^\xi)\\
&=\frac{\sqrt{\Psi^{r-1}(e^\xi)}}{4\Psi^r(e^\xi)}\frac{\sqrt{\Psi^{r-2}(e^\xi)}}{4\Psi^{r-1}(e^\xi)}\cdots\frac{e^{\xi/2}}{4\Psi(e^\xi)}\\
&=\frac{e^{\xi/2}}{4^r}\frac{1}{\Psi^r(e^\xi)\sqrt{\Psi(e^\xi)\cdots\Psi^{r-1}(e^\xi)}}.
\end{align*}
Next, we get Eq.~\eqref{eq:dderivativePsi} straightforwardly by Eq.~\eqref{eq:ddphir} and Proposition~\ref{prop:Psi}.
\end{proof}


Using the above properties of $\varphi^r$, let us derive the expansion of $\xi_r^\ast(y)$.
\begin{theorem}[Asymptotic forms of $\xi_r^\ast(y)$ around $y=0$ and $y=2^{-r}$]
\begin{enumerate}
\item Around $y=0$ which is the leftmost point of $I_r(y)$,
\[
\xi_r^\ast(\eta)=2\log\left(4^r\eta\sqrt{\Psi^r(0)\prod_{k=1}^{r}\Psi^k(0)}\right)+O(\eta).
\]
\item Around $y=2^{-r}$ which is the rightmost point of $I_r(y)$,
\[
\xi_r^\ast\left(\frac{1}{2^r}-\eta\right)=-2^r\log\left(2^{r-1+2^{1-r}}\eta\right)+O(\eta).
\]
\end{enumerate}
\label{thm:expansionxi}
\end{theorem}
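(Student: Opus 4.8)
The plan is to exploit that $\xi_r^\ast$ is by definition the inverse of $(\varphi^r)'$, for which Proposition~\ref{prop:derivativePsi} supplies the closed form~\eqref{eq:derivativePsi}. Writing $X=e^\xi$, the two endpoints of the admissible range of $y$ correspond to the two ends of the $\xi$-axis: since $(\varphi^r)'$ is increasing with $(\varphi^r)'(-\infty)=0$ and $(\varphi^r)'(+\infty)=2^{-r}$, the regime $y\to0^+$ is the regime $X\to0^+$, while $y\to2^{-r}$ is the regime $X\to+\infty$. In each case I would insert the appropriate asymptotics of the iterated map $\Psi^k(X)$ into~\eqref{eq:derivativePsi}, set $(\varphi^r)'(\xi)$ equal to the prescribed value of $y$, and solve for $\xi$ to leading order.

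For the left endpoint (item 1) the analysis is elementary. As $X\to0^+$ the factors $\Psi^k(X)$ tend to the finite limits $\Psi^k(0)$, so~\eqref{eq:derivativePsi} gives $(\varphi^r)'(\xi)\sim 4^{-r}e^{\xi/2}[\Psi^r(0)\prod_{k=1}^r\Psi^k(0)]^{-1/2}$. Setting this equal to $\eta$ and solving for $e^{\xi/2}$ yields $e^{\xi/2}=4^r\eta\sqrt{\Psi^r(0)\prod_{k=1}^r\Psi^k(0)}$, and taking logarithms produces the stated leading term; the neglected variation of the $\Psi^k(X)$ near $X=0$ is $O(X)=O(\eta^2)$, which is absorbed in the $O(\eta)$ error.

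The right endpoint (item 2) is the genuinely delicate part, and I expect it to be the main obstacle. Here the difficulty is that the leading-order value of $(\varphi^r)'(\xi)$ as $X\to\infty$ is the \emph{constant} $2^{-r}$, so the dependence on $\eta$ is carried entirely by the first subleading correction, and extracting it requires controlling the large-$X$ asymptotics of the whole tower $\Psi,\Psi^2,\dots,\Psi^r$. First I would establish, by induction through $\Psi^{k+1}=\Psi\circ\Psi^k$ together with $\Psi(X)=\tfrac12\sqrt X\,(1+X^{-1/2})$, the expansion $\Psi^k(X)=c_kX^{2^{-k}}(1+\epsilon_k(X))$, where $c_{k+1}=\tfrac12\sqrt{c_k}$, $c_1=\tfrac12$, solving to $c_k=2^{-2+2^{1-k}}$, and where the relative error obeys $\epsilon_{k+1}=\tfrac12\epsilon_k+\Psi^k(X)^{-1/2}+\cdots$, so that $\epsilon_k\sim c_{k-1}^{-1/2}X^{-2^{-k}}$. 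Substituting into~\eqref{eq:derivativePsi} one checks the exact cancellation $c_r\prod_{k=1}^r c_k=2^{-2r}$, which reproduces $(\varphi^r)'(+\infty)=2^{-r}$; since the slowest-decaying relative correction in the bracket is $\epsilon_r$ (the $k=r$ factor, which occurs twice and survives the exponent $-1/2$ as a net factor $1-\epsilon_r$), one obtains $(\varphi^r)'(\xi)=2^{-r}(1-\epsilon_r+O(\eta^2))$. Finally, setting $(\varphi^r)'(\xi)=2^{-r}-\eta$ gives $\epsilon_r=2^r\eta$, hence $X^{-2^{-r}}=2^r\sqrt{c_{r-1}}\,\eta$; taking logarithms and using $2^r\sqrt{c_{r-1}}=2^{r-1+2^{1-r}}$ delivers $\xi_r^\ast(2^{-r}-\eta)=-2^r\log(2^{r-1+2^{1-r}}\eta)+O(\eta)$.

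The bookkeeping in the last paragraph is where care is needed: one must verify that, among the $r+1$ relative corrections appearing in the product $\Psi^r\prod_{k=1}^r\Psi^k$, the term $\epsilon_r$ strictly dominates, because $2^{-k}$ is smallest at $k=r$ and hence $X^{-2^{-r}}$ decays most slowly; and one must track the multiplicative constants through the $c_k$-recursion so that the leading constant collapses to exactly $2^{-r}$ while the correction constant comes out as $\sqrt{c_{r-1}}$. Once this asymptotic expansion of $\Psi^k$ is in hand, both items follow by the same \emph{invert-and-take-logarithms} scheme.
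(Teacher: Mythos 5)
Your proposal is correct, and it reaches both expansions by a route that differs from the paper's in one mechanical respect. The paper treats the identity $(\varphi^r)''(\xi_r^\ast(y))\,(\xi_r^\ast)'(y)=1$ as a differential equation, expands $(\varphi^r)''$ near $\xi=\mp\infty$ via Proposition~\ref{prop:derivativePsi}, and integrates from the endpoint; you instead expand $(\varphi^r)'$ itself near $\xi=\mp\infty$ and invert $(\varphi^r)'(\xi)=y$ directly. The asymptotic input is identical in both arguments --- the product formula \eqref{eq:derivativePsi} and the large-$X$ behaviour $\Psi^k(X)=2^{-\rho_k}X^{2^{-k}}(1+O(X^{-2^{-k}}))$ with $\rho_k=2-2^{1-k}$, together with the cancellations $2^{-r}+\sum_{k=1}^r 2^{-k}=1$ and $\rho_r+\sum_{k=1}^r\rho_k=2r$ --- and your identification of the doubled $k=r$ factor as the dominant relative correction, giving $(\varphi^r)'(\xi)=2^{-r}(1-\epsilon_r+O(\eta^2))$ with $\epsilon_r\sim c_{r-1}^{-1/2}X^{-2^{-r}}$, is exactly the bookkeeping the paper performs for $(\varphi^r)''$ (where the dominant term of the sum is $l=r-1$). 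Your inversion is arguably the more direct argument for this particular theorem, since it avoids the second derivative entirely; the paper's ODE formulation pays off only in the subsequent theorem, where the same integration immediately yields $I_r$. One small correction: near $X=0$ the variation $\Psi^k(X)-\Psi^k(0)$ is $O(\sqrt{X})=O(\eta)$, not $O(X)=O(\eta^2)$ as you state (indeed $\Psi(X)-\Psi(0)=\sqrt{X}/2$); this is harmless because a relative error $1+O(\eta)$ inside the logarithm still contributes only $O(\eta)$ to $\xi_r^\ast$, but the exponent you quote is wrong.
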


\begin{proof}
By Eq.~\eqref{eq:derivativeI},
\begin{equation}
(\varphi^r)''(\xi_r^\ast(y))(\xi_r^\ast)'(y)=1.
\label{eq:DExi}
\end{equation}
In this proof, we use this formula as a differential equation to determine $\xi_r^\ast$.
Since $\xi_r^\ast(0)=-\infty$ and $\xi_r^\ast(2^{-r})=\infty$,
we need to expand $(\varphi^r)''(\xi)$ around $\xi=\mp\infty$, corresponding to $y=0$ and $2^{-r}$, by means of Proposition~\ref{prop:derivativePsi}.
\begin{enumerate}
\item 
We expand $(\varphi^r)''(\xi)$ on condition that $e^\xi$ is sufficiently small.
By definition, $\Psi(e^\xi)=(e^{\xi/2}+1)/2=\Psi(0)+O(e^{\xi/2})$, and similarly $\Psi^k(e^\xi)=\Psi^k(0)+O(e^{\xi/2})$ for $k\ge1$.
Putting into Eq.~\eqref{eq:derivativePsi}, we immediately obtain
\[
(\varphi^r)'(\xi)=\frac{e^{\xi/2}}{4^r}\left[\Psi^r(0)\prod_{k=1}^r\Psi^k(0)\right]^{-1/2}+O(e^\xi)
=: \frac{e^{\xi/2}}{Q_r}+O(e^\xi).
\]
To estimate the sum in Eq.~\eqref{eq:dderivativePsi}, the term of $l=0$ is $O(e^{-\xi/2})$ which is the leading order, and the others are $O(1)$.
Thus, by neglecting the terms other than $l=0$, we have
\[
(\varphi^r)''(\xi)=(\varphi^r)'(\xi)e^{\xi/2}\left(\frac{1}{2}e^{-\xi/2}+O(1)\right).
\]
Hence, Eq.~\eqref{eq:DExi} becomes
\[
\left(\frac{1}{2Q_r}\exp(\xi_r^\ast/2)+O(\exp(\xi_r^\ast))\right)\frac{d\xi_r^\ast}{d y}=1.
\]
By integrating from $y=0$ to $\eta$,
\[
\exp(\xi_r^\ast(\eta)/2)+O(\exp(\xi_r^\ast(\eta)))=Q_r\eta,
\]
and the solution $\xi_r^\ast$ is
\[
\xi_r^\ast(\eta)=2\log(Q_r\eta)+O(\eta).
\]
\item 
Contrary to the above, $\xi_r^\ast(y)$ tends to infinity as $y\nearrow2^{-r}$, so we need to expand $(\varphi^r)''(\xi)$ when $e^\xi$ is sufficiently large.
From the observation
\[
\Psi(e^{\xi})=\frac{e^{\xi/2}}{2}+O(1),\quad
\Psi^2(e^{\xi})=\frac{\sqrt{\Psi(e^\xi)}+1}{2}=\frac{e^{\xi/4}}{2^{3/2}}+O(1),
\]
we reasonably set $\Psi^k(e^{\xi})=2^{-\rho_k}\exp(\xi/2^k)+O(1)$.
The exponent $\rho_k$ satisfies $\rho_1=1$ and $\rho_{k+1}=\rho_k/2+1$, so that
\[
\rho_k=2-2^{1-k}.
\]
Noting that
\[
\rho_r+\sum_{k=1}^r \rho_k=2r,
\]
we have
\[
\left[\Psi^r(e^\xi)\prod_{k=1}^r \Psi^k(e^\xi)\right]^{-1/2}=2^r e^{-\xi/2}(1+O(e^{-\xi/2^r})).
\]
In this case, the dominant term of the sum in Eq.~\eqref{eq:dderivativePsi} corresponds to $l=r-1$, so that
\[
(\varphi^r)''(\xi)=2^{\rho_r-1-2r}e^{-\xi/2^r}(1+O(e^{-\xi/2^r})).
\]
Finally, integrating Eq.~\eqref{eq:DExi} from $y=2^{-r}-\eta$ to $2^{-r}$ as above, we get
\[
\xi_r^\ast\left(\frac{1}{2^r}-\eta\right)=-2^r\log\left(2^{r-1+2^{1-r}}\eta\right)+O(\eta).
\]
\end{enumerate}
\end{proof}

By using $\xi_r^\ast(y)$ in Theorem~\ref{thm:expansionxi}, we reach asymptotic forms of $I_r(y)$.
\begin{theorem}[Asymptotic forms of the rate function $I_r(y)$ around $y=0$ and $2^{-r}$]
\begin{enumerate}
\item Around $y=0$,
\[
I_r(\eta)=2\eta\log\left(4^r\eta\sqrt{\Psi^r(0)\prod_{k=1}^r\Psi^k(0)}\right)-\log\Psi^r(0)-2\eta+O(\eta^2).
\]
\item Around $y=2^{-r}$,
\[
I_r\left(\frac{1}{2^r}-\eta\right)=2^r\eta\log\left(2^{r-1+2^{1-r}}\eta\right)+(2-2^{1-r})\log2-2^r\eta+O(\eta^2).
\]
\end{enumerate}
\label{thm:expansionI}
\end{theorem}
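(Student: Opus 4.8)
The plan is to feed the two expansions of $\xi_r^\ast$ obtained in Theorem~\ref{thm:expansionxi} directly into the defining relation \eqref{eq:defI}, namely $I_r(y)=y\,\xi_r^\ast(y)-\varphi^r(\xi_r^\ast(y))$, and to expand the composite $\varphi^r(\xi_r^\ast(y))$ about the same endpoint ($\xi\to-\infty$ for $y\to0$ and $\xi\to+\infty$ for $y\to2^{-r}$). Since Theorem~\ref{thm:expansionxi} already supplies $\xi_r^\ast$ to the order needed, the product $y\,\xi_r^\ast(y)$ is immediate; the work lies in controlling $\varphi^r$ at its two ends, for which I would use $\varphi^r(\xi)=\log\Psi^r(e^\xi)$ from Proposition~\ref{prop:Psi} together with the small- and large-$e^\xi$ behaviour of $\Psi^k$ established inside the proof of Theorem~\ref{thm:expansionxi}.

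For the leftmost point I would first write $\eta\,\xi_r^\ast(\eta)=2\eta\log\bigl(4^r\eta\sqrt{\Psi^r(0)\prod_{k=1}^r\Psi^k(0)}\bigr)+O(\eta^2)$, which already produces the logarithmic term of the claimed formula. Because $\xi_r^\ast(\eta)\to-\infty$ forces $e^{\xi_r^\ast(\eta)}=O(\eta^2)\to0$, the composite $\varphi^r(\xi_r^\ast(\eta))=\log\Psi^r(e^{\xi_r^\ast(\eta)})$ tends to the endpoint value $\log\Psi^r(0)$, and its subleading term is linear in $\eta$. To pin the coefficient I would integrate the small-$e^\xi$ expansion $(\varphi^r)'(\xi)=e^{\xi/2}/Q_r+O(e^\xi)$ (with $Q_r=4^r\sqrt{\Psi^r(0)\prod_{k=1}^r\Psi^k(0)}$) to obtain $\varphi^r(\xi)=\log\Psi^r(0)+2e^{\xi/2}/Q_r+O(e^\xi)$, then substitute $e^{\xi_r^\ast(\eta)/2}=Q_r\eta+O(\eta^2)$ to get $\varphi^r(\xi_r^\ast(\eta))=\log\Psi^r(0)+2\eta+O(\eta^2)$. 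Subtracting yields exactly the stated tail $-\log\Psi^r(0)-2\eta$.

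For the rightmost point the structure is a cancellation. Here $\varphi^r$ has an affine asymptote: from $\Psi^r(e^\xi)=2^{-\rho_r}e^{\xi/2^r}(1+O(e^{-\xi/2^r}))$ with $\rho_r=2-2^{1-r}$ one reads off $\varphi^r(\xi)=\xi/2^r-(2-2^{1-r})\log2+o(1)$. Writing $y=2^{-r}-\eta$ in \eqref{eq:defI}, the term $2^{-r}\xi_r^\ast$ coming from $y\,\xi_r^\ast$ cancels the slope contribution $\xi_r^\ast/2^r$ coming from $\varphi^r$, leaving $I_r(2^{-r}-\eta)=-\eta\,\xi_r^\ast(2^{-r}-\eta)+(2-2^{1-r})\log2+(\text{correction})$. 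Inserting the expansion of $\xi_r^\ast$ then turns $-\eta\,\xi_r^\ast$ into the announced logarithmic term $2^r\eta\log(2^{r-1+2^{1-r}}\eta)+O(\eta^2)$.

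The one genuinely delicate point is recovering the linear terms $-2\eta$ and $-2^r\eta$, since these demand the first correction beyond the constant (respectively affine) behaviour of $\varphi^r$, and in the rightmost case this correction is only exponentially small in $\xi$ and must be tracked through the product in \eqref{eq:dderivativePsi}. I expect the cleanest way to bypass this bookkeeping is to integrate the identity $I_r'(y)=\xi_r^\ast(y)$ from \eqref{eq:derivativeI} instead: applying $\int\log t\,dt=t\log t-t$ to the leading logarithm of $\xi_r^\ast$ produces both the logarithmic and the linear terms simultaneously, while the two constants of integration are fixed by the endpoint values $I_r(0)=-\log\Psi^r(0)$ and $I_r(2^{-r})=(2-2^{1-r})\log2$, which follow directly from the limiting behaviour of $\varphi^r$ at $\mp\infty$. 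The $O(\eta^2)$ remainders are controlled throughout by the $O(\eta)$ error already carried by $\xi_r^\ast$ in Theorem~\ref{thm:expansionxi}.
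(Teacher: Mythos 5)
Your proposal is correct and, in its final recommended form---integrating $I_r'(y)=\xi_r^\ast(y)$ via $\int\log t\,dt=t\log t-t$ and fixing the constants by the endpoint limits $I_r(0)=-\lim_{\xi\to-\infty}\varphi^r(\xi)=-\log\Psi^r(0)$ and $I_r(2^{-r})=\lim_{\xi\to\infty}(\xi/2^r-\varphi^r(\xi))=(2-2^{1-r})\log 2$---is exactly the paper's proof. The alternative direct substitution into $I_r(y)=y\xi_r^\ast(y)-\varphi^r(\xi_r^\ast(y))$ that you sketch first would also work (and you correctly flag that its only delicate point is tracking the exponentially small correction to the affine asymptote of $\varphi^r$ at $+\infty$), but it is not needed.
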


\begin{proof}
\begin{enumerate}
\item As in the proof of Proposition~\ref{prop:Psi}, we use $Q_r=4^r[\Psi^r(0)\prod_{k=1}^r\Psi^k(0)]^{1/2}$ and
\[
I_r'(\eta)=\xi_r^\ast(\eta)=2\log(Q_r\eta)+O(\eta).
\]
By integrating from $0$ to $\eta$,
\[
I_r(\eta)-I_r(0)=2\eta\log(Q_r\eta)-2\eta+O(\eta^2).
\]
The proof is completed by calculating $I_r(0)$ as
\begin{align*}
I_r(0)
&=\lim_{y\to0}\left(2y\log(Q_ry)+O(y^2)-\varphi^r(\xi_r^\ast(y))\right)\\
&=-\lim_{\xi\to-\infty}\varphi^r(\xi)\\
&=-\lim_{\xi\to-\infty}\log\Psi^r(e^\xi)\\
&=-\log\Psi^r(0).
\end{align*}
\item
By integrating Eq.~\eqref{eq:DExi} from $2^{-r}-\eta$ to $2^{-r}$, we have
\[
I_r\left(\frac{1}{2^r}-\eta\right)-I_r\left(\frac{1}{2^r}\right)
=2^r\eta\log\left(2^{r-1+2^{1-r}}\eta\right)-2^r\eta+O(\eta^2).
\]
We need to be careful in the calculation of $I_r(2^{-r})$, because both $\xi_r^\ast(y)$ and $\varphi^r(\xi_r^\ast(y))$ in Eq.~\eqref{eq:defI} diverge as $y\to2^{-r}$.
\begin{align*}
I_r\left(\frac{1}{2^r}\right)
&=\lim_{\eta\to0}\left[\left(\frac{1}{2^r}-\eta\right)\xi_r^\ast\left(\frac{1}{2^r}-\eta\right)-\varphi^r\left(\xi_r^\ast\left(\frac{1}{2^r}-\eta\right)\right)\right]\\
&=\lim_{\xi\to\infty}\left(\frac{\xi}{2^r}-\varphi^r(\xi)\right)\\
&=\lim_{\xi\to\infty}\left(\frac{\xi}{2^r}-\log\Psi^r(e^\xi)\right)\\
&=\lim_{\xi\to\infty}\left(\frac{\xi}{2^r}-\log\left(2^{-\rho_r}e^{\xi/2^r}+O(1)\right)\right)\\
&=\rho_r\log2=(2-2^{1-r})\log2.
\end{align*}
\end{enumerate}
\end{proof}

\begin{rem}
The same calculation applies to $y=4^{-r}$ ($\xi_r^\ast(4^{-r})=0$).
Note that $e^0=1$ is the fixed point of $\Psi$,  namely $\Psi(1)=1$, so we obtain
\[
(\varphi^r)''(\xi)=\frac{4^r-1}{3\cdot16^r}+O(\xi).
\]
The Taylor expansion~\eqref{eq:parabola} of $I_r(y)$ around $y=4^{-r}$ is reproduced.
\end{rem}

By the exact form of $I_1(y)=I(y)$ in Eq.~\eqref{eq:rate}, $I_1(y)$ is symmetric about $y=1/4$.
On the other hand, comparing the approximate forms of $I_r(y)$ near $y=0$ and $2^{-r}$ in Theorem~\ref{thm:expansionI}, $I_r(y)$ for $r\ge2$ is clearly asymmetric.

In Fig.~\ref{fig3}, we show numerical results of $\xi_r^\ast(y)$ and $I_r(y)$ along with the asymptotic forms at $y\simeq0$ (dashed curves) and $y\simeq2^{-r}$ (dot-dashed curves) from Theorems~\ref{thm:expansionxi} and \ref{thm:expansionI}.
(The solid curves are the same as in Fig.~\ref{fig2}.)

\begin{figure}[t!]\centering
\raisebox{40mm}{(a)}
\includegraphics[scale=0.9]{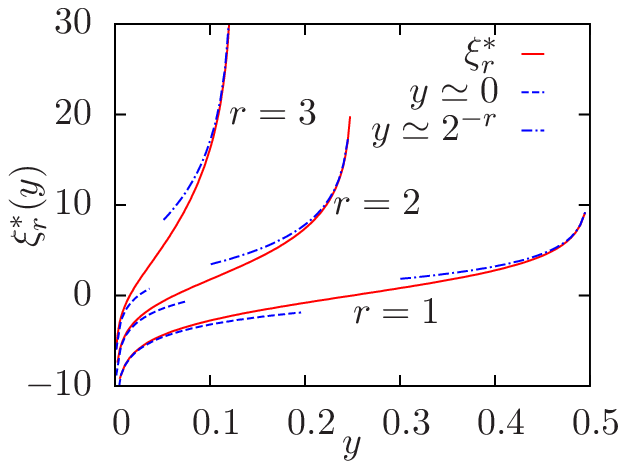}
\raisebox{40mm}{(b)}
\includegraphics[scale=0.9]{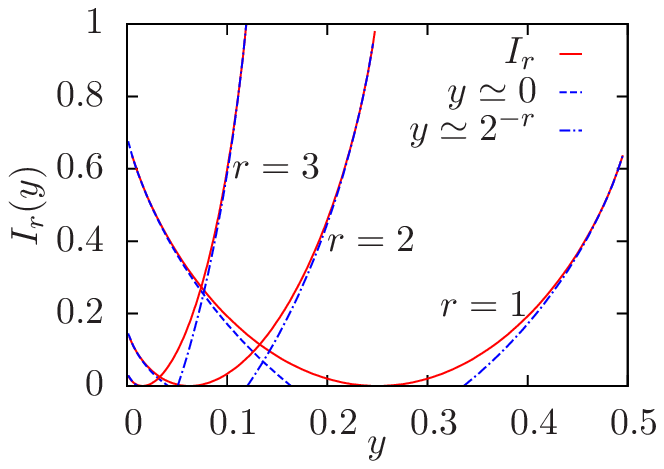}
\caption{
Numerical results.
The solid curves in (a) and (b) are respectively the rate function $I_r(y)$ and $\xi_r^\ast(y)$ for $r=1,2,3$, and the dashed and dot-dashed curves respectively represent the approximate curves at $y\simeq0$ and $y\simeq2^{-r}$.
}
\label{fig3}
\end{figure}

\section*{Acknowledgments}
The author is grateful to referees for instructing recent related articles.
The idea that Eq.~\eqref{eq:cltYamamoto2} is derived by the pruning operation is suggested by a referee.
The present work was partially supported by a University of the Ryukyus Research Project Promotion Grant for Young Researchers (17SP04109), and Hayao Nakayama Foundation for Science \& Technology and Culture (H29-B-41).


\end{document}